\newtheorem{theorem}{Theorem}[section]
\newtheorem{question}{Question}[section]
\newtheorem*{theorem*}{Theorem}
\newtheorem{corollary}[theorem]{Corollary}
\newtheorem{proposition}[theorem]{Proposition}
\newtheorem{remark}[theorem]{Remark}
\def\Ric{\text{Ric}}
\def\l{\lambda}
\def\Ric{\operatorname{Ric}}
\def\Rm{\operatorname{Rm}}
\numberwithin{equation}{section}
\newcommand*\owedge{\mathpalette\@owedge\relax}
\newcommand*\@owedge[1]{%
  \mathbin{%
    \ooalign{%
      $#1\m@th\bigcirc$\cr
      \hidewidth$#1\m@th\wedge$\hidewidth\cr
    }%
  }%
}
\begin{document}

\title[]{K\"ahlerity of Einstein four-manifolds}


\author{Xiaolong Li}\thanks{The first author's research is partially supported by a start-up grant at Wichita State University}
\address{Department of Mathematics, Statistics and Physics, Wichita State University, Wichita, KS, 67260}
\email{xiaolong.li@wichita.edu}

\author{Yongjia Zhang}
\address{School of Mathematical Sciences, Shanghai Jiao Tong University, Shanghai, China, 200240}
\email{zhangyongjia918@163.com}

\subjclass[2020]{53C21, 53C25, 53C55}
\keywords{Einstein four-manifolds, K\"ahler-Einstein, self-dual Weyl tensor, curvature restriction}

\begin{abstract}
We prove that a closed oriented Einstein four-manifold is either anti-self-dual or (after passing to a double Riemannian cover if necessary) K\"ahler-Einstein, provided that $\l_2 \geq -\frac{S}{12}$, where $\l_2$ is the middle eigenvalue of the self-dual Weyl tensor $W^+$ and $S$ is the scalar curvature. 
An analogous result holds for closed oriented four-manifolds with $\delta W^+=0$. 
\end{abstract}

\maketitle

\section{Introduction}

We are concerned with the following question in this paper. 
\begin{question}\label{question}
Under what curvature conditions is a closed Einstein four-manifold K\"ahler? 
\end{question}

There have been several answers to this question. 
Derdzinski proved a fundamental result \cite{Derdzinski83} that if the self-dual Weyl tensor $W^+$ of a closed oriented Einstein four-manifold is parallel and  has at most two distinct eigenvalues at every point, then, after passing to a double Riemannian cover if necessary, the metric is K\"ahler.  
A result of Micallef and Wang \cite[Theorem 4.2]{MW93} asserts that a closed oriented four-manifold with harmonic self-dual Weyl tensor ($\delta W^+=0$) and half nonnegative isotropic curvature either is anti-self-dual or has a K\"ahler universal cover with constant positive scalar curvature (see also \cite{GL99}, \cite{RS16}, \cite{FKP14}, and \cite{Wu17TAMS} for alternative proofs).  

The purpose of this paper is to provide another curvature condition that characterizes closed K\"ahler-Einstein four-manifolds. To state our results, let us recall that the bundle $\Lambda^2$ of two-forms over an oriented (that is, orientable with an appointed orientation) Riemannian four-manifold $(M^4,g)$ admits an orthogonal decomposition $$\Lambda^2=\Lambda^+\oplus\Lambda^-,$$ 
with $\Lambda^+$ and $\Lambda^-$ being the $+1$ and $-1$ eigenspaces of the Hodge star operator, respectively. 
Sections of $\Lambda^+$ are called self-dual two-forms, while sections of $\Lambda^-$ are called anti-self-dual two-forms.  
The Riemann curvature tensor may be identified with a self-adjoint linear map 
$$\Rm:\Lambda^2\to \Lambda^2$$
called the curvature operator, via 
$$\Rm(e_i \wedge e_j)=\frac{1}{2} R_{ijkl}e_k \wedge e_l.$$
Corresponding to the decomposition $\Lambda^2=\Lambda^+\oplus\Lambda^-$, the Riemann curvature operator $\Rm$
can be decomposed into the following irreducible pieces:
\begin{align*}
    \Rm=\left[
\begin{array}{c|c}
 & \\
W^++\frac{S}{12}I & \overset{\circ}{\Ric}\\
 & \\
\hline
 & \\
 \overset{\circ}{\Ric} & W^-+\frac{S}{12}I \\
 & 
\end{array}
\right],
\end{align*}
where $S$ is the scalar curvature, $I$ is the identity map, $\overset{\circ}{\Ric}=\Ric -\tfrac{S}{4}g$ is the traceless Ricci tensor, and $W^{\pm}:\Lambda^{\pm} \to \Lambda^{\pm}$ are called the self-dual and anti-self-dual parts of the Weyl curvature operator $W:\Lambda^2\to\Lambda^2$. 
It is also convenient to denote by $\Rm^+$ and $\Rm^-$ the restriction of the Riemann curvature operator on $\Lambda^+$ and $\Lambda^-$, respectively. 
In other words, $\Rm^{\pm}=W^{\pm}+\frac{S}{12}I$. 
A four-manifold $(M^4,g)$ is said to be anti-self-dual if $W^+\equiv 0$ and self-dual if $W^-\equiv0$. 

It is well-known that on any K\"ahler surface $(M^4,J,g)$ with the natural orientation (in the sense that the K\"ahler form is self-dual), 
the self-dual Weyl operator $W^+:\Lambda^+ \to \Lambda^+$ is given by  
\begin{align*}
    W^+=\left[\begin{array}{ccc}
         \tfrac{S}{6}& &  \\
         & -\tfrac{S}{12} & \\
         & & -\tfrac{S}{12}
    \end{array}\right].
\end{align*}
In particular, if $\l_1 \leq \l_2 \leq \l_3$ are the eigenvalues of $W^+$, then we have
$$\l_2 = -\frac{S}{12}$$
on any K\"ahler surface, regardless of the sign of $S$. Equivalently, the middle eigenvalue of $\Rm^+$ vanishes on any K\"ahler surfaces. 

Our main result states that if a closed oriented Einstein four-manifold satisfies the condition 
\begin{equation*}
    \l_2 \geq -\frac{S}{12},
\end{equation*}
then it is either anti-self-dual or K\"ahler-Einstein (after passing to a double cover if necessary). 
More precisely, we prove
\begin{theorem}\label{theorem_main_0}
Let $(M^4,g)$ be a closed oriented Einstein four-manifold.
Denote by $\l_1\leq \l_2 \leq \l_3$ the eigenvalues of $W^+$ and by $S$ the scalar curvature.
If $\l_2 \geq -\frac{S}{12}$ everywhere, then one of the following assertions holds: 
\begin{enumerate}
    \item $S \neq 0$ and $(M,g)$ is K\"ahler-Einstein, or has a double Riemannian cover $\pi : (M^*,g^*)\to (M,g)$, where $g^*=\pi^*g$, such that $(M^*,g^*)$ is K\"ahler-Einstein.
    \item $(M,g)$ is anti-self-dual with nonnegative scalar curvature. Moreover, 
    \begin{itemize}
        \item if $S>0$, then $M$ is isometric to a quotient of $\mathbb{S}^4$ with the round metric or $\mathbb{CP}^2$ with the Fubini-Study metric.
        \item if $S=0$, then either $M$ is flat or its universal cover is a K3 surface with the Calabi-Yau metric.
    \end{itemize} 
\end{enumerate}
\end{theorem}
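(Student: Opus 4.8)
The plan is to combine Derdzinski's Weitzenb\"ock identity for the self-dual Weyl tensor of an Einstein four-manifold with a maximum-principle analysis of the eigenvalues of $\Rm^+$, and to read off the K\"ahler case from the structure theory already recalled in the introduction. One may assume $M$ is connected, so that $S$ is a constant, and record two preliminary facts: since $g$ is Einstein the Weyl tensor is harmonic, $\delta W^+=0$; and writing $\mu_1\le\mu_2\le\mu_3$ for the eigenvalues of $\Rm^+=W^++\tfrac{S}{12}I$, the hypothesis $\lambda_2\ge-\tfrac{S}{12}$ is exactly $\mu_2\ge0$, while the K\"ahler model of the introduction is exactly $\mu_1=\mu_2=0<\mu_3$ (when $S>0$) or $\mu_2=\mu_3=0>\mu_1$ (when $S<0$). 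In particular, for $S<0$ the hypothesis forces $W^+\not\equiv0$ and rules out the anti-self-dual alternative, whereas a K\"ahler--Einstein metric with $S=0$ is anti-self-dual; this already matches the trichotomy in the statement.

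The analytic engine is Derdzinski's identity, which on an Einstein four-manifold reads
\[
\tfrac12\Delta|W^+|^2=|\nabla W^+|^2+\tfrac{S}{2}|W^+|^2-18\det W^+ ,
\]
with $\Delta$ the Laplace--Beltrami operator. When $S=0$ the argument is immediate: the pinching $\lambda_2\ge0$ gives $\det W^+=\lambda_1\lambda_2\lambda_3\le0$ (as $\lambda_1\le0\le\lambda_2\le\lambda_3$), so $|W^+|^2$ is subharmonic on a closed manifold and hence constant, forcing $\nabla W^+\equiv0$ and $\det W^+\equiv0$; a parallel $W^+$ with a vanishing eigenvalue forces a holonomy reduction making the metric hyperk\"ahler, so in fact $W^+\equiv0$, i.e.\ the anti-self-dual case. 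For $S\neq0$ a single integration of the identity is inconclusive---over the admissible region the integrand $18\det W^+-\tfrac{S}{2}|W^+|^2$ changes sign---so I would instead apply the strong maximum principle to the middle eigenvalue $\mu_2$. Starting from Derdzinski's operator equation for $W^+$, one derives (in the barrier/viscosity sense, since $\mu_2$ is only Lipschitz) an elliptic differential inequality for $\mu_2$ whose zeroth-order part is the standard middle-eigenvalue reaction $\mu_2^2+\mu_1\mu_3$ of Hamilton's curvature analysis together with the scalar term. The aim is to show, using only $\mu_2\ge0$, that either $W^+\equiv0$ or $\mu_2\equiv0$; in the latter case the equality discussion returns $\nabla W^+\equiv0$ and forces a repeated eigenvalue, so that $W^+$ is parallel with at most two distinct eigenvalues everywhere.

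Once this is in hand, the K\"ahler conclusion of case (1) is precisely Derdzinski's theorem recalled in the introduction: a closed oriented Einstein four-manifold whose $W^+$ is parallel with at most two distinct eigenvalues is, after passing to a double Riemannian cover if necessary, K\"ahler--Einstein; the normalization $\lambda_2=-\tfrac{S}{12}$ with $S\neq0$ places us in case (1). For the anti-self-dual alternative of case (2) one invokes the classification of anti-self-dual Einstein metrics: Hitchin's theorem gives the round $\mathbb{S}^4$ or the Fubini--Study $\mathbb{CP}^2$ when $S>0$, while a Ricci-flat anti-self-dual metric is flat or has a K3 universal cover carrying a Calabi--Yau metric when $S=0$.

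The main obstacle is the maximum-principle step for $S\neq0$. Unlike the $S=0$ case there is no global subharmonicity to exploit, and the difficulty is twofold: establishing the correct differential inequality for the non-smooth function $\mu_2$ (controlling the points where the eigenvalues of $\Rm^+$ collide) and then closing the sign analysis of the reaction term $\mu_2^2+\mu_1\mu_3$ using nothing beyond $\mu_2\ge0$, so as to cleanly separate the two alternatives $W^+\equiv0$ and $\mu_2\equiv0$. A secondary technical point is the equality discussion: one must extract both $\nabla W^+\equiv0$ and the presence of a repeated eigenvalue everywhere---rather than three distinct constant eigenvalues---before Derdzinski's structure theorem can be applied.
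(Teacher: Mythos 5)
Your overall skeleton (Weitzenb\"ock formula for $W^+$, a maximum-principle step, then Derdzinski's parallel-$W^+$ theorem and Hitchin's classification) matches the paper, and your treatment of the $S=0$ and anti-self-dual endgames is essentially sound. But the step you yourself flag as ``the main obstacle'' --- the maximum principle for $S\neq 0$ --- is exactly the step the proof lives or dies on, and the function you propose to run it on is the wrong one. The middle eigenvalue $\mu_2$ of $\Rm^+$ (equivalently $\lambda_2$ of $W^+$) is neither a max nor a min of quadratic forms in the operator, so the standard barrier construction (extend an eigenvector by parallel transport and differentiate) produces neither an upper nor a lower barrier for it; you cannot write a one-sided elliptic inequality for $\mu_2$ in the barrier sense by this method. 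Worse, even formally the reaction term you name, $\mu_2^2+\mu_1\mu_3$, has no sign under the sole hypothesis $\mu_2\ge 0$, since $\mu_1$ may be very negative while $\mu_3>0$. So the sign analysis cannot be ``closed'' as you hope: the gap is not technical but structural.

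The paper's resolution is to apply the maximum principle to $\lambda_3-\lambda_1$ instead. Here both ingredients work: $\lambda_3$ admits lower barriers and $\lambda_1$ upper barriers (largest and smallest eigenvalues are respectively convex and concave in the operator), so $\Delta(\lambda_3-\lambda_1)\ge \tfrac{S}{2}(\lambda_3-\lambda_1)-2(\lambda_3^2-\lambda_1^2)+4\lambda_2(\lambda_3-\lambda_1)=6(\lambda_3-\lambda_1)\bigl(\lambda_2+\tfrac{S}{12}\bigr)$ holds in the barrier sense, and the right-hand side is manifestly nonnegative under the hypothesis. The maximum principle then gives $\lambda_3-\lambda_1\equiv c$, with $c=0$ forcing $W^+\equiv 0$ and $c>0$ forcing $\lambda_2\equiv-\tfrac{S}{12}$. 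Your remaining loose ends then also resolve themselves: constancy of $S$ (Einstein) plus $\lambda_1+\lambda_3=-\lambda_2=\tfrac{S}{12}$ and $\lambda_3-\lambda_1=c$ make $\lambda_1,\lambda_3$ constants, the barrier inequalities $0\le -2(\lambda_1+\tfrac{S}{12})(\lambda_1-\tfrac{S}{6})$ and $0\ge -2(\lambda_3+\tfrac{S}{12})(\lambda_3-\tfrac{S}{6})$ pin the eigenvalues to $\{\tfrac{S}{6},-\tfrac{S}{12},-\tfrac{S}{12}\}$ (so the repeated eigenvalue you need for Derdzinski comes out automatically), and $\nabla W^+=0$ follows. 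You should replace the $\mu_2$-based step with this $\lambda_3-\lambda_1$ argument; as written, the proposal does not contain a proof of the central assertion.
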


In the nonnegative scalar curvature case, Theorem \ref{theorem_main_0} provides a generalization of the result of Micallef and Wang \cite{MW93}, as the condition $\l_2 \geq -\frac{S}{12}$ is weaker than half nonnegative isotropic curvature. Recall that an oriented four-manifold is said to have half nonnegative (resp. positive) isotropic curvature if $\Rm^+$ is two-nonnegative (resp. two-positive), i.e., the sum of the smallest two eigenvalues of $\Rm^+$ is nonnegative (resp. positive). 
In view of $\Rm^+=W^+ +\frac{S}{12}I$, we see that half nonnegative isotropic curvature is equivalent to
\begin{equation*}
    S \geq 0 \quad\text{ and }\quad  \l_1+\l_2 \geq -\tfrac{S}{6},
\end{equation*}
so it is stronger than the condition $\l_2 \geq -\frac{S}{12}$.  

In the negative scalar curvature case, Theorem \ref{theorem_main_0}, to the best of our knowledge, seems to be the only answer to Question \ref{question} other than that of Derdzinski \cite{Derdzinski83}.

As a corollary of Theorem \ref{theorem_main_0}, we have a sufficient and necessary condition for a closed Einstein four-manifold to be K\"ahler.

\begin{corollary}\label{corollary_main_0}
Let $(M^4,g)$ be a closed and oriented (positive or negative) Einstein four-manifold which satisfies $H^1(M;\mathbb{Z}_2)=0$ and is not isometric to $\mathbb{S}^4$ with the round metric. Then $(M^4,g)$ is K\"ahler if and only if $\lambda_2(W^+) \ge -\frac{S}{12}$ everywhere. 
\end{corollary}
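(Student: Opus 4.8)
The forward implication is the easy one. If $(M^4,g)$ carries a complex structure compatible with $g$ and with the given orientation, then its K\"ahler form is self-dual, so $W^+$ has eigenvalues $\tfrac{S}{6},-\tfrac{S}{12},-\tfrac{S}{12}$ by the formula recalled in the introduction; hence $\l_2=-\tfrac{S}{12}\geq-\tfrac{S}{12}$ everywhere. All the work is in the reverse implication, whose plan is to run the curvature hypothesis $\l_2\geq-\tfrac{S}{12}$ through Theorem \ref{theorem_main_0} and then delete every non-K\"ahler outcome using $H^1(M;\Z_2)=0$ and the exclusion of $\Sph^4$.

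So assume $\l_2\geq-\tfrac{S}{12}$ and apply Theorem \ref{theorem_main_0}, which leaves alternatives (1) and (2). Since $(M,g)$ is positive or negative Einstein, $S\neq0$; this discards the $S=0$ cases of alternative (2) (the flat and K3 possibilities) and reduces alternative (2) to $S>0$, in which $M$ is an oriented quotient of the round $\Sph^4$ or of $\CP^2$ with the Fubini--Study metric.

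Next I would clear out alternative (2). A finite group acting freely by isometries on the round $\Sph^4$ can contain no nonidentity orientation-preserving element (such an element lies in $\SO(5)$ and, $5$ being odd, has a $+1$ eigenvalue and hence a fixed point); this forces the group to be the order-two group generated by an orientation-reversing involution, with quotient $\R P^4$, which is non-orientable. Thus the only oriented quotient of the round $\Sph^4$ is $\Sph^4$ itself, excluded by hypothesis, and the remaining possibility in (2) is $\CP^2$ with the Fubini--Study metric, which is K\"ahler. For alternative (1), I would use $H^1(M;\Z_2)=0$ to remove the double cover: a connected double Riemannian cover $\pi:M^*\to M$ corresponds to a nonzero element of $H^1(M;\Z_2)=\operatorname{Hom}(\pi_1(M),\Z_2)$, so when this group vanishes the cover $M^*$ must be disconnected and $M$ itself is already K\"ahler-Einstein. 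Assembling the cases, $\l_2\geq-\tfrac{S}{12}$ forces $M$ to be K\"ahler, which closes the reverse implication.

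The step I expect to be most delicate is the orientation bookkeeping rather than any estimate. The K\"ahler condition selects the orientation making the K\"ahler form self-dual, whereas the $\CP^2$ arising in alternative (2) is anti-self-dual and therefore carries the reversed, non-complex orientation $\overline{\CP^2}$, which admits no compatible complex structure. One must therefore pin down precisely the sense in which such an $M$ counts as K\"ahler and keep the two orientations matched across the equivalence. It is also worth confirming that the roles of the hypotheses are exactly as intended: orientability together with $M\not\cong\Sph^4$ removes the $\Sph^4$-quotients, while $H^1(M;\Z_2)=0$ serves to eliminate the double cover in alternative (1).
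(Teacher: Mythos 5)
Your argument is correct and is exactly the one the paper intends: the paper offers no written proof of this corollary beyond the remark that it is "easily observed" from the proof of Theorem \ref{theorem_main_0}, and your route --- forward direction from the K\"ahler form of $W^+$, reverse direction by feeding $\l_2\ge -\frac{S}{12}$ into Theorem \ref{theorem_main_0}, killing the $\mathbb{S}^4$-quotients by orientability plus the exclusion of $\mathbb{S}^4$, and killing the double cover by $H^1(M;\mathbb{Z}_2)=0$ (equivalently, triviality of the eigenline bundle $L$, as noted in the paper's proof of Theorem \ref{theorem_main_0}) --- is the intended reconstruction. The orientation wrinkle you flag is genuine and is not addressed by the paper: the $\mathbb{CP}^2$ surviving from alternative (2) is anti-self-dual and hence carries the non-complex orientation, so the equivalence only holds if "K\"ahler" is read as a property of the Riemannian metric rather than of the oriented manifold, while the forward direction requires the K\"ahler form to be self-dual for the given orientation; making the convention explicit is the only repair needed, and your proof is otherwise complete.
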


In addition, Theorem \ref{theorem_main_0} also implies some interesting classification results. 
In the positive scalar curvature case, we have
\begin{corollary}\label{corollary classification 1}
Let $(M^4,g)$ be a closed simply-connected Einstein four-manifold with positive scalar curvature. 
If $\l_2(W^+) > -\frac{S}{12}$ on $M$, then $W^+ \equiv 0$ and $M$ is isometric to $\mathbb{S}^4$ with the round metric or $\mathbb{CP}^2$ with the Fubini-Study metric. 
\end{corollary}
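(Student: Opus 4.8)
The plan is to derive this directly from Theorem \ref{theorem_main_0}. Its hypothesis is met: the strict inequality $\l_2(W^+) > -\frac{S}{12}$ implies in particular $\l_2(W^+) \geq -\frac{S}{12}$ everywhere, so $(M^4,g)$ falls into one of the two alternatives of that theorem. The whole argument is then a matter of ruling out the K\"ahler-Einstein alternative by exploiting strictness, and of collapsing the ``quotient'' and ``double cover'' language using simple connectivity.

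First I would rule out alternative (1). Since $M$ is simply connected, it admits no nontrivial connected double cover—an index-two subgroup of $\pi_1(M)=\{1\}$ does not exist—so the double-cover subcase degenerates and (1) would force $(M,g)$ itself to be K\"ahler-Einstein. But on any K\"ahler surface carrying the orientation for which the K\"ahler form is self-dual, the eigenvalues of $W^+$ are $\frac{S}{6},\,-\frac{S}{12},\,-\frac{S}{12}$, as recalled in the introduction; in particular the middle eigenvalue satisfies $\l_2(W^+) = -\frac{S}{12}$ at every point. This contradicts the strict hypothesis $\l_2(W^+) > -\frac{S}{12}$. Hence alternative (1) cannot occur.

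Therefore alternative (2) must hold: $(M,g)$ is anti-self-dual, i.e. $W^+ \equiv 0$, with nonnegative scalar curvature. Since by assumption $S>0$, the $S>0$ branch of (2) applies and yields that $M$ is isometric to a quotient of $\mathbb{S}^4$ with the round metric or of $\mathbb{CP}^2$ with the Fubini-Study metric. Because $M$ is simply connected, the covering $\mathbb{S}^4 \to M$ (resp. $\mathbb{CP}^2 \to M$) must be trivial, so $M$ is isometric to $\mathbb{S}^4$ or to $\mathbb{CP}^2$ itself, which is exactly the assertion.

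The argument is essentially bookkeeping of the alternatives in Theorem \ref{theorem_main_0}, so I do not expect a serious obstacle; the only genuinely delicate point is the compatibility of orientations. One must make sure that the K\"ahler structure produced in alternative (1) induces the same orientation relative to which $W^+$, and hence $\l_2$, is computed, so that the identity $\l_2(W^+) = -\frac{S}{12}$ really does apply and contradicts strictness. This is guaranteed by the way the K\"ahler conclusion is obtained in Theorem \ref{theorem_main_0}, where the fixed orientation of $M$ is precisely the one making the K\"ahler form self-dual; I would simply record this compatibility explicitly to make the elimination of case (1) airtight.
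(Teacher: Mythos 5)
Your argument is correct and is essentially the paper's intended deduction (the paper merely remarks that this corollary ``can be easily observed from the proof of Theorem \ref{theorem_main_0}''). Your elimination of alternative (1) via the K\"ahler identity $\lambda_2(W^+)=-\tfrac{S}{12}$, together with using simple connectivity to kill the double cover and the quotient, is equivalent to invoking Proposition \ref{prop 2.1} directly, where the strict inequality already forces $W^+\equiv 0$; the orientation-compatibility point you flag is indeed guaranteed by the construction of $\omega$ as a section of $\Lambda^+$.
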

\begin{remark}
Corollary \ref{corollary classification 1} was proved by Polombo \cite{Polombo92} assuming the stronger condition of half positive isotropic curvature, which is equivalent to 
\begin{equation*}
    S > 0 \quad\text{ and } \quad \l_1+\l_2 > -\tfrac{S}{6}.
\end{equation*}
Alternative proofs of Polombo's result can be found in \cite{MW93}, \cite{FKP14}, \cite{RS16}, and \cite{Wu17TAMS}. 
\end{remark}
\begin{remark}
The assumption $\l_2 > -\frac{S}{12}$ in Corollary \ref{corollary classification 1} cannot be relaxed in view of the Einstein manifold $\mathbb{S}^2 \times \mathbb{S}^2$, which has $\l_2=-\frac{S}{12}$ everywhere. 
\end{remark}

In the Ricci-flat case, we have 
\begin{corollary}\label{corollary classification 2}
Let $(M^4,g)$ be a closed oriented Ricci-flat four-manifold.
If $\l_2(W^+) \geq 0$ on $M$, then $W^+ \equiv 0$, and either $M$ is flat or its universal cover is a K3 surface with the Calabi-Yau metric.
\end{corollary}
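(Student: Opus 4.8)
The plan is to obtain this statement as a direct specialization of Theorem~\ref{theorem_main_0}, so that the work reduces to checking that Ricci-flatness forces us into the anti-self-dual alternative. First I would record that since $(M^4,g)$ is Ricci-flat, its scalar curvature vanishes identically: $S=\tr_g\Ric=0$. Consequently the threshold $-\frac{S}{12}$ equals zero everywhere, and the hypothesis $\l_2(W^+)\geq 0$ is precisely the curvature restriction $\l_2\geq -\frac{S}{12}$ required by Theorem~\ref{theorem_main_0}. As $(M,g)$ is in particular a closed oriented Einstein four-manifold, the theorem applies.

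Next I would rule out the first alternative. Case (1) of Theorem~\ref{theorem_main_0} asserts $S\neq 0$, which contradicts $S\equiv 0$; hence case (1) cannot occur and case (2) must hold. This immediately yields $W^+\equiv 0$, i.e.\ $(M,g)$ is anti-self-dual, and the stated nonnegativity of $S$ holds trivially since $S=0$. Reading off the $S=0$ branch of case (2) then gives exactly the desired conclusion: either $M$ is flat, or its universal cover is a K3 surface carrying the Calabi-Yau metric.

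The one point requiring attention—and the closest thing to an obstacle—is that, unlike the positive- or negative-scalar-curvature situations, no passage to a double Riemannian cover is needed here: the double cover arises only in case (1), which we have excluded, so Ricci-flatness places us unconditionally in the anti-self-dual alternative. If one preferred a more self-contained route, one could instead note that $W^+\equiv 0$ together with $\Ric\equiv 0$ reduces the restricted holonomy to a subgroup of $\operatorname{SU}(2)$, making $(M,g)$ Calabi-Yau, and then invoke the classification of compact Ricci-flat K\"ahler surfaces (flat quotients versus K3); but this structure is already packaged inside Theorem~\ref{theorem_main_0}, so the direct reduction above is the cleanest argument.
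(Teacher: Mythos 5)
Your proposal is correct and follows essentially the same route as the paper, which derives this corollary directly from Theorem~\ref{theorem_main_0}: Ricci-flatness gives $S\equiv 0$, so alternative (1) (which requires $S\neq 0$) is excluded, and the $S=0$ branch of alternative (2) is exactly the stated conclusion. Your observation that no double cover is needed here, and the optional holonomy-reduction remark, are both consistent with the paper's reasoning.
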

\begin{remark}
Corollary \ref{corollary classification 2} was proved by Micallef and Wang \cite{MW93} under the stronger condition of half-nonnegative isotropic curvature. Alternative proofs of their result can be found in \cite{FKP14}, \cite{RS16}, and \cite{Wu17TAMS}. 
However, it seems that none of these proofs work under our weaker curvature assumption.  
\end{remark}

With a bit more efforts, the method employed to prove Theorem \ref{theorem_main_0} also yields analogous results for closed four-manifolds with harmonic (namely, divergence free) self-dual Weyl tensor, i.e. 
\begin{equation}\label{divergence_free_Weyl}
    \delta W^+:=-\nabla\cdot W^+=0.
\end{equation}
All Einstein metrics satisfy \eqref{divergence_free_Weyl} as a consequence of the second Bianchi identity. However, \eqref{divergence_free_Weyl} is actually much weaker than the Einstein condition. 

\begin{theorem}\label{Theorem_main_1}
Let $(M^4,g)$ be a closed oriented four-manifold with $\delta W^+=0$. 
Denote by $\l_1\leq \l_2 \leq \l_3$ the eigenvalues of $W^+$ and by $S$ the scalar curvature.
If $\l_2 \geq -\frac{S}{12}$ everywhere, then one of the following assertions holds: 
\begin{enumerate}[(1)]
    \item $(M^4,g)$ has constant nonzero scalar curvature, and $(M^4,g)$ is K\"ahler, or has a double Riemannian cover $\pi : (M^*,g^*)\to (M,g)$, where $g^*=\pi^*g$, such that $(M^*,g^*)$ is K\"ahler. 
    \item $(M^4,g)$ is anti-self-dual with nonnegative scalar curvature.
\end{enumerate}
\end{theorem}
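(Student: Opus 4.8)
The plan is to work pointwise with the self-dual Weyl operator $W^+$ and exploit the condition $\lambda_2 \geq -\frac{S}{12}$ together with the tracelessness $\lambda_1+\lambda_2+\lambda_3=0$. The first observation is that tracelessness forces $\lambda_1 \leq 0 \leq \lambda_3$, and combined with $\lambda_2 \geq -\frac{S}{12}$ we can extract structural information about the eigenvalue configuration. The key analytic tool for the $\delta W^+=0$ case (as opposed to the Einstein case) is the Weitzenböck-type formula for the self-dual Weyl tensor. Recall that $\delta W^+=0$ is exactly the condition that $W^+$, viewed as a section of the relevant bundle, is a harmonic form (up to the scalar curvature correction), so one has an elliptic equation of the form $\Delta W^+ = Q(W^+) + (\text{scalar curvature terms})$, where $Q$ is a quadratic expression in the eigenvalues $\lambda_i$.

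**Next I would** apply an integral or maximum-principle argument to the quantity controlling the eigenvalues. The standard approach here is to consider a test function such as $|W^+|^2$ or, more refined, the function $\det W^+$ or a symmetric combination tuned so that the condition $\lambda_2 = -\frac{S}{12}$ becomes the equality case. One integrates the Weitzenböck formula against an appropriate weight; the reaction term $Q(W^+)$ for the self-dual Weyl operator is well understood and has the schematic form involving $\lambda_1\lambda_2 + \lambda_2\lambda_3 + \lambda_3\lambda_1$ and products like $\lambda_i^3$. Using $\lambda_1+\lambda_2+\lambda_3=0$, these symmetric functions can be rewritten, and the hypothesis $\lambda_2 \geq -\frac{S}{12}$ is designed precisely to make the relevant algebraic inequality hold with a definite sign, forcing the gradient term $|\nabla W^+|^2$ to vanish and the inequality to saturate. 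Saturation then means either $W^+\equiv 0$ (the anti-self-dual case) or $\lambda_2 \equiv -\frac{S}{12}$ with $\lambda_1=\lambda_2$, i.e. $W^+$ has exactly two distinct eigenvalues of the Kähler type $\left(\tfrac{S}{6},-\tfrac{S}{12},-\tfrac{S}{12}\right)$.

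**The conclusion step** is then to promote this pointwise eigenvalue structure to the Kähler conclusion. When $W^+$ has the eigenvalue profile $\left(\tfrac{S}{6},-\tfrac{S}{12},-\tfrac{S}{12}\right)$ with $S\neq 0$, the top eigenvalue $\lambda_3=\tfrac{S}{6}$ is simple, so its eigenspace defines a smooth line field in $\Lambda^+$, hence (up to a double cover to resolve orientability of the line field) a global unit self-dual two-form $\omega$. One shows $\omega$ is parallel: the harmonicity condition $\delta W^+=0$ forces the scalar curvature to be constant in case (1), and the eigenform equation combined with the second Bianchi identity implies $\nabla\omega=0$, so $\omega$ is a parallel Kähler form and the metric is Kähler. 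The passage to the double cover $\pi:(M^*,g^*)\to(M,g)$ is needed exactly when the eigenline bundle is not orientable, which is governed by $H^1(M;\mathbb{Z}_2)$, matching the statement.

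**The main obstacle** I anticipate is twofold. First, the Weitzenböck formula under merely $\delta W^+=0$ (rather than the full Einstein condition) carries extra curvature terms that do not cancel as cleanly; controlling these, and verifying that the resulting reaction inequality still has the correct sign under the weaker hypothesis $\lambda_2 \geq -\frac{S}{12}$, is the technical heart of the argument and likely requires a careful rewriting of $Q(W^+)$ using the constraint $\lambda_1+\lambda_2+\lambda_3=0$. Second, establishing that $\omega$ is genuinely parallel (not merely an eigenform of $W^+$) requires using the divergence-free condition to differentiate the eigenvalue relation, and here proving the scalar curvature is constant is an essential intermediate lemma that distinguishes case (1) from case (2). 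I expect this is where the phrase ``with a bit more efforts'' in the excerpt points: the Einstein case gives constant $S$ for free, whereas under $\delta W^+=0$ one must derive it.
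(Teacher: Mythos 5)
Your roadmap agrees with the paper's at the level of headlines (Weitzenb\"ock formula, rigidity of the eigenvalue profile, eigenline bundle and double cover, parallel K\"ahler form), and you correctly identify that proving $S$ is constant is the extra work beyond the Einstein case. But the two steps that actually carry the proof are missing or would fail as you describe them. First, the test quantities you name do not work. Pairing the Weitzenb\"ock formula $\Delta W^+=\tfrac{S}{2}W^+-2(W^+)^2-4(W^+)^\#$ with $W^+$ and integrating gives $-\int_M|\nabla W^+|^2=\int_M\bigl(\tfrac{S}{2}|W^+|^2-18\det W^+\bigr)$, and the integrand is \emph{not} pointwise nonnegative under $\l_2\ge-\tfrac{S}{12}$: take $(\l_1,\l_2,\l_3)=(-\tfrac32,-\tfrac12,2)$ and $S=6$, which satisfies the hypothesis but gives $\tfrac{S}{2}|W^+|^2-18\det W^+=-\tfrac{15}{2}$. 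So you cannot force $|\nabla W^+|^2$ to vanish this way, and in any case $\nabla W^+=0$ is the terminal conclusion of the argument, not an intermediate saturation. The paper's key idea, absent from your proposal, is to apply the maximum principle to the single Lipschitz scalar $\l_3-\l_1$, for which the Weitzenb\"ock formula yields, in the barrier sense, $\Delta(\l_3-\l_1)\ge 6(\l_3-\l_1)\bigl(\l_2+\tfrac{S}{12}\bigr)\ge 0$; hence $\l_3-\l_1$ is a constant $c\ge 0$, and either $c=0$ (anti-self-dual) or $\l_2\equiv-\tfrac{S}{12}$.

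Second, the constancy of $S$ is asserted, not proved, and this is precisely the content of the paper's Proposition 3.1. The mechanism is: on the open dense set where the eigenvalue multiplicities are locally constant, Derdzinski's local formulas for $\delta W^+=0$ add explicit nonnegative gradient-type terms to $\Delta(\l_3-\l_1)$, namely $4(\l_3-\l_1)|(\iota_{c^\#}\omega_2)^{\#}|^2+2(\l_3-\l_2)|(\iota_{b^\#}\omega_1)^{\#}|^2+2(\l_2-\l_1)|(\iota_{a^\#}\omega_3)^{\#}|^2$; since $\l_3-\l_1$ is already constant, these terms vanish, which forces $\nabla\l_1=\nabla\l_3=0$, hence all $\l_i$ are constants and $S=-12\l_2$ is constant. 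Nothing in your sketch produces this. Likewise, ``the eigenform equation combined with the second Bianchi identity implies $\nabla\omega=0$'' is not an argument; the paper pairs the Weitzenb\"ock formula with $\omega\otimes\omega$ and integrates, using $W^+(\nabla_X\omega,\nabla_X\omega)\ge 0$ (valid because $\nabla_X\omega$ lies in the eigenspace of $-\tfrac{S}{12}$), to arrive at $0\le\tfrac{S}{3}\int_M|\nabla\omega|^2$ when $S<0$, forcing $\nabla\omega=0$. Finally, note that your eigenvalue profile $\bigl(\tfrac{S}{6},-\tfrac{S}{12},-\tfrac{S}{12}\bigr)$ with $\l_3$ simple only occurs for $S>0$; for $S<0$ the simple eigenvalue is $\l_1=\tfrac{S}{6}$ and the line bundle is its eigenspace, a sign distinction that matters for the integral inequality above.
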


We would like to point out that both Theorem \ref{theorem_main_0} and Theorem \ref{Theorem_main_1} are also motivated by an important work of Peng Wu \cite{Wu21}, in which he found a beautiful characterization of conformally K\"ahler Einstein metrics of positive scalar curvature on closed oriented four-manifolds via the condition $\det(W^+) >0$. 
LeBrun \cite{Lebrun21}, based on his earlier work in \cite{LeBrun15}, provided an entirely different proof of the result and relaxed the assumptions to $W^+ \neq 0$ and $|W^{+}|^{-3} \det(W^+) \geq -\tfrac{5\sqrt{2}}{21\sqrt{21}}$. 
In comparison with Wu and Lebrun's result, Theorem \ref{theorem_main_0} has the advantage of giving a curvature characterization in the negative scalar curvature case, but has the disadvantage of using a closed curvature condition. Moreover, the condition $\l_2 \geq -\frac{S}{12}$ is not conformally invariant, thus it does not allow any conformally K\"ahler manifolds.

Theorem \ref{Theorem_main_1} implies the following results, which are known under the stronger assumption of half positive (resp. nonnegative) isotropic curvature (see \cite{Polombo92}, \cite{MW93}, \cite{FKP14}, \cite{RS16}, and \cite{Wu17TAMS}). 
\begin{corollary}\label{corollary classification 3}
Let $(M^4,g)$ be a closed oriented four-manifold with $\delta W^+=0$ and positive scalar curvature. 
If $\l_2 > -\frac{S}{12}$ on $M$, then $W^+ \equiv 0$. 
\end{corollary}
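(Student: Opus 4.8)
The plan is to deduce this directly from Theorem \ref{Theorem_main_1}, using the strict inequality to eliminate the Kähler alternative. Since the hypothesis $\l_2 > -\frac{S}{12}$ certainly implies $\l_2 \geq -\frac{S}{12}$, and $(M^4,g)$ is a closed oriented four-manifold with $\delta W^+ = 0$ and positive scalar curvature, Theorem \ref{Theorem_main_1} applies and places $(M,g)$ into one of its two alternatives. Alternative (2) asserts precisely that $W^+ \equiv 0$, which is the desired conclusion. Thus the entire task reduces to excluding alternative (1), namely the case in which $(M,g)$ is Kähler, or becomes Kähler after passing to a double Riemannian cover $\pi:(M^*,g^*)\to(M,g)$ with $g^* = \pi^* g$.

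To rule this out, I would invoke the computation recalled in the introduction: on any Kähler surface with the orientation for which the Kähler form is self-dual, the operator $W^+$ has eigenvalues $\frac{S}{6}, -\frac{S}{12}, -\frac{S}{12}$, so that the middle eigenvalue satisfies $\l_2 = -\frac{S}{12}$ identically, independently of the sign of $S$. Hence, if $(M,g)$ were itself Kähler, we would have $\l_2 = -\frac{S}{12}$ at every point, in direct contradiction with the strict hypothesis $\l_2 > -\frac{S}{12}$. This settles the case in which $M$ is already Kähler.

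The one point that requires care is the double-cover case. Here I would use that $\pi$ is an orientation-preserving local isometry, so that the self-dual bundle $\Lambda^+$, the operator $W^+$, and the scalar curvature $S$ all pull back from $M$ to $M^*$; in particular the eigenvalues $\l_1 \leq \l_2 \leq \l_3$ of $W^+$ at a point $p \in M$ coincide with those at any preimage $\tilde p \in M^*$. Since $(M^*,g^*)$ is Kähler, with the orientation induced from $M$ for which its Kähler form is self-dual, the previous computation gives $\l_2 = -\frac{S}{12}$ on $M^*$, and therefore $\l_2 = -\frac{S}{12}$ on all of $M$, again contradicting the strict inequality. Consequently alternative (1) is impossible, we are forced into alternative (2), and $W^+ \equiv 0$.

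I expect the argument to be short once Theorem \ref{Theorem_main_1} is in hand; the only genuinely delicate step is the orientation bookkeeping in the covering case, that is, verifying that the orientation pulled back from $M$ is the natural Kähler orientation on $M^*$ for which the Kähler form lies in $\Lambda^+$, so that the identity $\l_2 = -\frac{S}{12}$ is actually available there and transfers back to $M$.
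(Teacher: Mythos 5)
Your argument is correct in substance but takes a different route from the paper's. You use Theorem \ref{Theorem_main_1} as a black box and then exclude alternative (1) via the classical spectral form of $W^+$ on a K\"ahler surface ($\l_2=-\tfrac{S}{12}$ identically), pulling eigenvalues back through the double cover when necessary. The paper instead reads the corollary off an intermediate step: Proposition \ref{prop 2.1} already says that if $W^+\not\equiv 0$ then $\l_2\equiv-\tfrac{S}{12}$ everywhere, which contradicts the strict hypothesis at once, so $W^+\equiv 0$ without ever invoking the K\"ahler conclusion. The paper's route is shorter and, more importantly, sidesteps the one loose end you correctly flag: the statement of Theorem \ref{Theorem_main_1} says only that $(M^*,g^*)$ is K\"ahler, and the identity $\l_2(W^+)=-\tfrac{S}{12}$ requires the K\"ahler form to be \emph{self-dual} for the orientation pulled back from $M$ --- a compatibility that is not literally part of the theorem's statement and must be extracted from its proof (where $\omega$ is constructed as a section of $\Lambda^+$). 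The same caveat in fact applies to the non-cover case, not only the covering case. So your proof closes once that orientation compatibility is confirmed, but if you are going to open up the proof of the theorem anyway, it is cleaner to stop at Proposition \ref{prop 2.1} as the paper does.
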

\begin{corollary}\label{corollary classification 4}
Let $(M^4,g)$ be a closed oriented four-manifold with $\delta W^+=0$ and nonnonegative scalar curvature.
If $\l_2 \geq 0$ on $M$, then $W^+ \equiv 0$.
\end{corollary}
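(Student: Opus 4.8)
The plan is to obtain this statement as an immediate consequence of Theorem \ref{Theorem_main_1}, so the first thing I would do is verify that the hypotheses of that theorem are in force. Since the scalar curvature satisfies $S \geq 0$, we have $-\frac{S}{12} \leq 0$, and the assumption $\l_2 \geq 0$ therefore yields $\l_2 \geq 0 \geq -\frac{S}{12}$ everywhere. Thus $(M^4,g)$ meets the curvature requirement of Theorem \ref{Theorem_main_1}, and one of its two alternatives must hold.

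If alternative (2) occurs there is nothing to prove, since it asserts precisely that $(M^4,g)$ is anti-self-dual, i.e. $W^+ \equiv 0$. So the crux is to rule out alternative (1). Suppose it holds. Then $(M^4,g)$ has constant scalar curvature $S \neq 0$, and since $S \geq 0$ we must have $S > 0$; moreover $(M,g)$ itself, or a double Riemannian cover $(M^*,g^*)$, is K\"ahler. On any K\"ahler surface the self-dual Weyl operator has eigenvalues $\frac{S}{6},\, -\frac{S}{12},\, -\frac{S}{12}$, so its middle eigenvalue is $\l_2 = -\frac{S}{12}$, as recorded in the introduction. With $S > 0$ this forces $\l_2 = -\frac{S}{12} < 0$, contradicting $\l_2 \geq 0$.

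The one point requiring care, and the only step I expect to carry any content, is the passage to the double cover together with its orientation. Because $\pi : (M^*,g^*) \to (M,g)$ is a local isometry, the operator $W^+$ on $M^*$ is the pullback of that on $M$, so its eigenvalues agree pointwise with those on $M$; in particular $\l_2 \geq 0$ persists on $M^*$. Here I would also need the K\"ahler orientation of $M^*$ furnished by Theorem \ref{Theorem_main_1} to be the one for which the K\"ahler form is self-dual relative to the pulled-back orientation, so that the computation $\l_2 = -\frac{S}{12}$ genuinely refers to the same $W^+$ whose sign we control; I expect this compatibility to be established already inside the proof of Theorem \ref{Theorem_main_1} rather than reproved here. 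Granting it, the contradiction $-\frac{S}{12} \geq 0$ with $S > 0$ reappears on $M^*$. Hence alternative (1) is impossible, alternative (2) must hold, and we conclude $W^+ \equiv 0$.
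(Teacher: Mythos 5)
Your proof is correct and follows essentially the paper's intended route: the paper derives this corollary from (the proof of) Theorem \ref{Theorem_main_1}, where the key point is exactly the one you isolate, namely that if $W^+\not\equiv 0$ then $\l_2=-\tfrac{S}{12}$ with $S$ a positive constant, contradicting $\l_2\ge 0$. Your concern about the double cover is already settled in the paper's construction, since the K\"ahler form there is by construction a section of $\Lambda^+$ for the pulled-back orientation; alternatively one can bypass the K\"ahler statement entirely and quote Proposition \ref{prop 2.2}(1) directly.
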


To conclude this introduction, we give a brief discussion of our strategies to prove the above-mentioned results. 
In order to prove Theorem \ref{theorem_main_0}, we apply the maximum principle to the following partial differential inequality
\begin{equation}\label{eq 1.2}
    \Delta(\l_3-\l_1) \geq 6(\l_3 -\l_1)(\l_2+\tfrac{S}{12})
\end{equation}
to conclude that $\l_3-\l_1$ must be a constant. Moreover, we must have $\l_2=-\frac{S}{12}$ everywhere unless $M$ is anti-self-dual. 
This together with the Einstein condition implies that all the $\l_i$'s are constant functions, and their values can be read from the differential inequalities satisfied by them. The desired K\"ahlerity then follows from the work of Derdzinski \cite{Derdzinski83}. 

The key to prove Theorem \ref{Theorem_main_1} is to show that the scalar curvature must be constant unless $M$ is anti-self-dual. We achieve this by picking up some extra gradient terms in \eqref{eq 1.2}, which are used to conclude that the $\l_i$'s are constants. It is worth mentioning that these helpful extra terms are used in Wu \cite{Wu21} as well.

The proofs of Theorem \ref{theorem_main_0} and Theorem \ref{Theorem_main_1} are given in Section 2 and Section 3, respectively.

\section{The Einstein Case}
In this section, we prove Theorem \ref{theorem_main_0}. First of all, we prove two technical propositions which will also be applied in Section 3. Note that Proposition \ref{prop 2.1} assumes only harmonic self-dual Weyl tensor, while Proposition \ref{prop 2.2} assumes, in addition, constant scalar curvature; these assumptions are consequences of the Einstein condition.

\begin{proposition}\label{prop 2.1}
Let $(M^4,g)$ be a closed oriented four-manifold with $\delta W^+=0$. Denote by $\l_1\leq \l_2 \leq \l_3$ the eigenvalues of $W^+$. If $\l_2 \geq -\frac{S}{12}$ everywhere, then either  $W^+ \equiv 0$ or $\l_3-\l_1$ is equal to a positive constant and $\l_2 =-\frac{S}{12}$ everywhere. 
\end{proposition}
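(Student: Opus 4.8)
The plan is to deduce everything from the single elliptic inequality \eqref{eq 1.2}, understood in the barrier (viscosity) sense, since $\lambda_3-\lambda_1=\lambda_{\max}(W^+)-\lambda_{\min}(W^+)$ is only Lipschitz at points where eigenvalues of $W^+$ cross. Once \eqref{eq 1.2} is available the proof of the proposition is short: the hypothesis $\lambda_2\geq-\tfrac{S}{12}$ together with $\lambda_3-\lambda_1\geq0$ makes the right-hand side nonnegative, so $\lambda_3-\lambda_1$ is a (barrier) subsolution of the Laplacian, and the strong maximum principle on the closed manifold $M$ forces $\lambda_3-\lambda_1$ to be a constant $c\geq0$.

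The heart of the matter, and the step I expect to be the main obstacle, is establishing \eqref{eq 1.2} itself. I would begin from a Weitzenb\"ock formula for the harmonic self-dual Weyl tensor: regarding $W^+$ as a trace-free symmetric endomorphism of the rank-three bundle $\Lambda^+$, the hypothesis $\delta W^+=0$ together with the second Bianchi identity yields a second-order identity whose reaction term is, schematically,
\begin{equation*}
\Delta W^+ \;=\; -6\,(W^+)^2_0+\tfrac{S}{2}\,W^+ + (\text{gradient terms}),
\end{equation*}
where $(W^+)^2_0$ denotes the trace-free part of $(W^+)^2$; the consistency check $\tfrac12\Delta|W^+|^2=|\nabla W^+|^2-18\det W^++\tfrac S2|W^+|^2$ reproduces Derdzinski's identity and pins down the coefficients. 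I would then apply Hamilton's eigenvalue technique: extending a unit eigenvector of $\lambda_3$ (resp. $\lambda_1$) smoothly near a point, $\lambda_3$ is a barrier subsolution and $\lambda_1$ a barrier supersolution, with the second-order perturbation (gradient) terms entering with a favorable sign for the difference $\lambda_3-\lambda_1$ so that they may be discarded. Since $(W^+)^2_0$ contributes eigenvalue $-6\lambda_i^2$ up to a common trace term that cancels in the difference, and $\lambda_1+\lambda_2+\lambda_3=0$, the surviving reaction for $\lambda_3-\lambda_1$ is
\begin{equation*}
-6(\lambda_3^2-\lambda_1^2)+\tfrac S2(\lambda_3-\lambda_1)=6\lambda_2(\lambda_3-\lambda_1)+\tfrac S2(\lambda_3-\lambda_1)=6(\lambda_3-\lambda_1)\left(\lambda_2+\tfrac{S}{12}\right),
\end{equation*}
which is exactly \eqref{eq 1.2}. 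The delicate points are getting the numerical coefficients right for harmonic (not merely Einstein) $W^+$, checking that the non-Einstein contributions do not spoil the inequality, and rigorously justifying the barrier inequalities at points where $\lambda_3$ or $\lambda_1$ has higher multiplicity.

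With \eqref{eq 1.2} in hand I would conclude by separating the two cases for the constant $c=\lambda_3-\lambda_1$. If $c=0$, then $\lambda_1=\lambda_2=\lambda_3$, and tracelessness of $W^+$ forces all three eigenvalues to vanish, i.e.\ $W^+\equiv0$. If $c>0$, then $\lambda_3-\lambda_1$ is a positive constant, so $\Delta(\lambda_3-\lambda_1)=0$, and integrating \eqref{eq 1.2} over the closed manifold gives
\begin{equation*}
0=\int_M\Delta(\lambda_3-\lambda_1)\,dV \;\geq\; 6c\int_M\left(\lambda_2+\tfrac{S}{12}\right)dV \;\geq\; 0,
\end{equation*}
the last step by hypothesis. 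Hence $\int_M\left(\lambda_2+\tfrac{S}{12}\right)dV=0$; since the integrand is continuous and nonnegative, it vanishes identically, giving $\lambda_2=-\tfrac{S}{12}$ everywhere. This is precisely the asserted dichotomy, so the proposition follows once the Weitzenb\"ock-based inequality \eqref{eq 1.2} is secured.
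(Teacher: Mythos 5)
Your proposal is correct and follows essentially the same route as the paper: the Weitzenb\"ock identity you write with $-6(W^+)^2_0$ is exactly the paper's $\Delta W^+=\tfrac{S}{2}W^+-2(W^+)^2-4(W^+)^{\#}$ in disguise (as your consistency check with $\operatorname{tr}((W^+)^3)=3\det W^+$ confirms), the barrier inequalities for $\lambda_1$ and $\lambda_3$ are obtained by the same Hamilton-type trick with parallel-extended eigenvectors, and the maximum-principle conclusion is identical, the only cosmetic difference being that the paper reads off $\lambda_2=-\tfrac{S}{12}$ pointwise from $0=\Delta(\lambda_3-\lambda_1)\geq 6c(\lambda_2+\tfrac{S}{12})\geq 0$ rather than by integration. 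The gradient terms you flag as the delicate point do not in fact appear in the tensorial Weitzenb\"ock formula when $\delta W^+=0$ (they only arise when differentiating the eigenvalues in a smooth eigenframe, as in the paper's Section 3, where they indeed have the favorable sign you predict), so the step you were worried about goes through cleanly.
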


\begin{proposition}\label{prop 2.2}
Let $(M^4,g)$ be a closed oriented four-manifold with $\delta W^+=0$ and with constant scalar curvature $S$. Denote by $\l_1\leq \l_2 \leq \l_3$ the eigenvalues of $W^+$. If $\l_2 \geq -\frac{S}{12}$ everywhere and $W^+ \not\equiv 0$, then the following statements hold:
\begin{enumerate}
    \item $\l_2 = -\frac{S}{12}$;
    \item $\l_1 = -\frac{S}{12}$ and $\l_3=\frac{S}{6}$ if $S>0$, and $\l_1 = \frac{S}{6}$ and $\l_3=-\frac{S}{12}$ if $S<0$;
    \item $\nabla W^+ =0$. 
\end{enumerate}
\end{proposition}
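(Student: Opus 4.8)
The plan is to use Proposition~\ref{prop 2.1} to reduce the entire statement to an algebraic computation. Since $W^+\not\equiv 0$, Proposition~\ref{prop 2.1} already gives assertion (1), namely $\l_2=-\frac{S}{12}$ everywhere, and tells us that $\l_3-\l_1$ equals a positive constant $c$. As $S$ is now constant, $\l_2$ is constant; together with the trace-free identity $\l_1+\l_2+\l_3=0$ this gives $\l_1+\l_3=\frac{S}{12}$, and combining with $\l_3-\l_1=c$ shows that all three eigenvalues are constant, with $\l_1=\frac12(\frac{S}{12}-c)$ and $\l_3=\frac12(\frac{S}{12}+c)$. It therefore remains only to determine $c$ (assertion (2)) and to prove $\nabla W^+=0$ (assertion (3)).

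The engine is the Weitzenb\"ock formula for $W^+$ that $\delta W^+=0$ yields through the second Bianchi identity, which underlies \eqref{eq 1.2} and which I write as
\begin{equation*}
\Delta W^+=\tfrac{S}{2}\,W^+-6\Big((W^+)^2-\tfrac13|W^+|^2 I\Big).
\end{equation*}
For a unit eigenvector $\xi_i$ with eigenvalue $\l_i$ this gives $\langle(\Delta W^+)\xi_i,\xi_i\rangle=a_i$, where $a_i:=\tfrac{S}{2}\l_i-6\l_i^2+2|W^+|^2$, and one computes $a_3-a_1=6(\l_3-\l_1)(\l_2+\tfrac{S}{12})$. Now I exploit the constancy of the extremal eigenvalues. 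At a point $p$, extend a unit $\l_3$-eigenvector by parallel transport to a field $\xi_3$; then $\psi:=\langle W^+\xi_3,\xi_3\rangle$ is smooth, satisfies $\psi\le\l_3\equiv\text{const}$ with equality at $p$, hence attains a local maximum at $p$, so $\Delta\psi(p)\le 0$. Since $\Delta\psi(p)=a_3$, we get $a_3\le 0$; the same argument with a $\l_1$-eigenvector gives $a_1\ge 0$. But $a_1-a_3=-6(\l_3-\l_1)(\l_2+\tfrac{S}{12})=0$ because $\l_2=-\frac{S}{12}$, so in fact $a_1=a_3=0$. Solving $a_3=0$ (which reduces to $c^2=\tfrac{S^2}{16}$, i.e.\ $c=\tfrac{|S|}{4}$, forcing $S\neq0$) together with $\l_1+\l_3=\frac{S}{12}$ and $\l_1<\l_3$ yields exactly assertion (2): $\l_1=-\frac{S}{12},\ \l_3=\frac{S}{6}$ if $S>0$, and $\l_1=\frac{S}{6},\ \l_3=-\frac{S}{12}$ if $S<0$.

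For assertion (3) I would sidestep any discussion of eigenbundles and instead differentiate the now-constant function $|W^+|^2$. Pairing the Weitzenb\"ock formula with $W^+$ and using $\tr\big((W^+)^3\big)=3\det W^+$ gives $\langle W^+,\Delta W^+\rangle=\tfrac{S}{2}|W^+|^2-18\det W^+$, so the Bochner identity reads
\begin{equation*}
0=\tfrac12\Delta|W^+|^2=|\nabla W^+|^2+\tfrac{S}{2}|W^+|^2-18\det W^+.
\end{equation*}
Substituting the eigenvalues from (2) one checks the identity $36\det W^+=S|W^+|^2$, equivalently $18\det W^+=\tfrac{S}{2}|W^+|^2$; the displayed equation then forces $|\nabla W^+|^2=0$, that is, $\nabla W^+=0$.

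I expect the only real difficulties to be bookkeeping rather than conceptual. First, one must fix the exact constants and signs in the Weitzenb\"ock formula, which I normalize by requiring that it reproduce \eqref{eq 1.2}. Second, the final eigenvalue list always has a repeated eigenvalue, so one might worry about the legitimacy of the one-sided inequalities $a_3\le 0\le a_1$; this worry evaporates precisely because the extremal eigenvalues have already been shown to be constant, so the smooth barriers $\langle W^+\xi_i,\xi_i\rangle$ need only be compared against a constant and the elementary second-derivative test at an interior extremum applies irrespective of multiplicity.
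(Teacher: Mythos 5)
Your argument is correct and follows essentially the same route as the paper: Proposition~\ref{prop 2.1} plus constant $S$ forces all three eigenvalues to be constant, and evaluating the Weitzenb\"ock-derived (in)equalities at these constants pins down their values. Your explicit Bochner computation $0=\tfrac12\Delta|W^+|^2=|\nabla W^+|^2+\tfrac{S}{2}|W^+|^2-18\det W^+$ is a correct and welcome fleshing-out of step (3), which the paper dismisses as ``immediate.''
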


\begin{proof}[Proof of Proposition \ref{prop 2.1}]
According to \cite[page 664]{MW93}, $\delta W^+=0$ is equivalent to the Weitzenb\"ock formula
\begin{equation}\label{Weitzenbock1}
    \Delta W^+ =\tfrac{S}{2}W^+ -2(W^+)^2 -4(W^+)^{\#},
\end{equation}
where the $(W^+)^\#$ is the adjoint matrix of $W^+$.
The reader may consult \cite{hamilton86} or \cite{CNL06} for more information.   
If we choose an orthonormal basis $\{\omega_{i}\}_{i=1}^3$ of $\Lambda^+$ that diagonalizes $W^+$ with eigenvalues $\l_1 \leq \l_2 \leq \l_3$, then, with respect to this basis, we have
\begin{align*}
    (W^+)^2=\left[\begin{array}{ccc}
        \l_1^2 &  &  \\
         & \l_2^2  &  \\
         & & \l_3^2
    \end{array}\right]
\end{align*}
and 
\begin{align*}
    (W^+)^\#=\left[\begin{array}{ccc}
        \l_2 \l_3 &  &  \\
         & \l_1 \l_2  &  \\
         & & \l_1\l_2
    \end{array}\right].
\end{align*}
Therefore, the Weitzenb\"ock formula \eqref{Weitzenbock1} implies the following differential inequalities for the eigenvalues $\lambda_1$ and $\lambda_3$
\begin{eqnarray}
    \Delta \l_1 &\leq& \tfrac{S}{2}\l_1 -2\l_1^2 -4\l_2 \l_3,\label{simplepdel1} \\
    \Delta \l_3 &\geq& \tfrac{S}{2}\l_3 -2\l_3^2 -4\l_1 \l_2. \label{simplepdel3}
\end{eqnarray}
Since $\l_1$ and $\l_3$ are only locally Lipschitz functions on $M$, the inequalities above are all understood in the sense of barrier (see \cite{Calabi58}). To prove \eqref{simplepdel1} and \eqref{simplepdel3}, one may construct the barriers as follows. Fix $p\in M$ and let $\omega\in \Lambda^+_p$ be the unit eigenvector of $W^+_p$ associated with $\lambda_3$. Now we may extend $\omega$ to a neighborhood of $p$ by parallel transport and compute using the normal coordinates at $p$:
\begin{align*}
    \Delta(W^+(\omega,\omega))&= (\Delta W^+)(\omega,\omega)
    \\
    & =\tfrac{S}{2}W^+(\omega,\omega)-2|W^+(\omega)|^2-4(W^+)^\#(\omega,\omega)
    \\
    &=\tfrac{S}{2}\l_3-2\l_3^2-4\l_1\l_2 \quad\text{ at } p.
\end{align*}
Obviously, $W^+(\omega,\omega)$ is a lower barrier of $\l_3$ at $p$, and this proves \eqref{simplepdel3}; the proof of \eqref{simplepdel1} is similar.

Subtracting \eqref{simplepdel1} from \eqref{simplepdel3} yields
\begin{eqnarray*}
    \Delta (\l_3-\l_1) &\geq& \tfrac{S}{2}(\l_3-\l_1) -2(\l_3^2 -\l_1^2) +4\l_2 (\l_3 -\l_1)  \\
    &=& (\l_3-\l_1)\left(\tfrac{S}{2}-2\l_1 -2\l_3 +4\l_2  \right) \\
    &=& 6(\l_3-\l_1)\left(\l_2 +\tfrac{S}{12}\right),
\end{eqnarray*}
where we have used $\l_1+\l_2+\l_3=0$ in the last step. 
It follows from $\l_2 \geq -\frac{S}{12}$ that 
\begin{equation*}
    \Delta (\l_3-\l_1) \geq 6(\l_3-\l_1)\left(\l_2 +\tfrac{S}{12}\right) \geq 0,
\end{equation*}
in the sense of barrier. 
By the maximum principle, we conclude that $\l_3-\l_1 \equiv c \geq 0$.  
Note that $c=0$ implies $W^+\equiv 0$. 
In case $c>0$, the desired equality $\l_2 =-\frac{S}{12}$ follows from 
\begin{equation*}
    0=\Delta (\l_3-\l_1) \geq 6(\l_3-\l_1)\left(\l_2 +\tfrac{S}{12}\right).
\end{equation*}
The proof is finished.
\end{proof}

\begin{proof}[Proof of Proposition \ref{prop 2.2}]
Part (1) follows immediately from Proposition \ref{prop 2.1} since $W^+ \not\equiv 0$. 

To prove part (2), we first notice that $\l_3 -\l_1$ is equal to a positive constant by Proposition \ref{prop 2.1}. 
Now the constant scalar curvature assumption implies that $\l_1 +\l_3$ is a constant function, as 
$$\l_1 +\l_3=-\l_2=\tfrac{S}{12}.$$
It then follows that both $\l_1$ and $\l_3$ must be constant functions. 
Substituting $\l_2 = -\frac{S}{12}$ and $\l_1+\l_2+\l_3 =0$ into the differential inequalities satisfied by $\l_1$ and $\l_3$, we obtain that
\begin{eqnarray*}
    0 = \Delta \l_1 &\leq& \tfrac{S}{2}\l_1 -2\l_1^2 -4\l_2 \l_3 =-2(\l_1+\tfrac{S}{12})(\l_1 -\tfrac{S}{6})\\
    0 = \Delta \l_3 &\geq& \tfrac{S}{2}\l_3 -2\l_3^2 -4\l_1 \l_2 =-2(\l_3+\tfrac{S}{12})(\l_3 -\tfrac{S}{6}). 
\end{eqnarray*}
One easily reads from above inequalities that we must have 
\begin{eqnarray*}
    \l_1 =\l_3=0, && \text{ if } S=0; \\
     \l_1=-\tfrac{S}{12} \text{ and } \l_3=\tfrac{S}{6}, && \text{ if } S>0; \\
    \l_1=\tfrac{S}{6} \text{ and } \l_3=-\tfrac{S}{12}, && \text{ if } S<0. 
\end{eqnarray*}
Now part (2) is proved. 

Part (3) follows immediately from part (2) and the constant scalar curvature assumption. 

\end{proof}

We now give the proof of Theorem \ref{theorem_main_0}. 
\begin{proof}[Proof of Theorem \ref{theorem_main_0}]

If $M$ is anti-self-dual, then $0=\lambda_2\ge-S/12$ implies that $S$ is a nonnegative constant. By Hitchin's classification \cite{Hitchin74} of half conformally flat Einstein four-manifolds with nonnegative scalar curvature (see also \cite[Theorem 13.30]{Besse08}), $(M^4,g)$ is one of the manifolds as described in part (2) of the statement.

If $M$ is not anti-self-dual, then by Proposition \ref{prop 2.2}, we have that $\nabla W^+ =0$ and $W^+$ has at most two distinct eigenvalues at every point. 
One can then invoke the result of Derdzinski \cite[Therorem 2]{Derdzinski83} to conclude that either $M$ is K\"ahler or it has a double cover that is K\"ahler.
Below we provide a more direct proof using an elegant argument of Lebrun \cite{Lebrun21}, which is also applicable to the proof of Theorem \ref{Theorem_main_1}. 
We shall only present the case where $M$ has negative constant scalar curvature, as it can be easily adapted to the positive scalar curvature case by flipping signs and reversing directions of the related inequalities. 

Since $\l_1=S/6$ is an isolated eigenvalue of $W^+$, we have that the corresponding eigenspaces of $W^+$ at each point on $M$ form a one-dimensional subbundle of $\Lambda^+$. Denote this bundle by $L$. If $L$ is a trivial bundle (in particular, if $H^1(M;\mathbb{Z}_2)=0$), then we can find a non-vanishing global section $\omega$ of $L$ with constant norm everywhere. If $L$ does not have a non-vanishing global section, then we may let $M^*$ be all the elements in $L$ with constant unit norm, and the restriction of the bundle projection $\pi:M^*\to M$ is a double cover. If we equip $M^*$ with the Remannian metric $g^*=\pi^*g$, then $(M^*,g^*)$ also satisfies Proposition \ref{prop 2.1} and $W^+$ admits a global eigenvector section associated with $\lambda_1=S/6$. Henceforth, we will assume that $L$ admits a global section $\omega$ with constant norm $|\omega|\equiv \sqrt{2}$. 

Since $\omega$, when viewed as an operator on $TM$, satisfies
\begin{equation*}
    \omega\cdot \omega = -\operatorname{id},
\end{equation*}
we have that $\omega$ provides an almost complex structure. We need only to show that $\omega$ is a closed form.

We first observe that 
\begin{equation}\label{eq 2.1}
    W^+(\nabla_X\omega,\nabla_X\omega)\geq 0.
\end{equation}
%
for any tangent vector $X$.
This is because $\omega$ has constant norm, which implies that $\langle\nabla_X\omega,\omega\rangle=0$ and hence $\nabla_X\omega$ is in the eigenspace of $\lambda_2=\lambda_3=-S/12>0$. 

Now recall that when $\delta W^+=0$, the following Weitzenb\"ock formula holds (c.f. \cite[(4.1c)]{MW93} or \cite[Equation (9)]{Lebrun21})
\begin{align}\label{Weitzenbock}
    \Delta W^+=\tfrac{S}{2}W^+-6W^+ \circ W^++2|W^+|^2I.
\end{align}
It is easy to check that \eqref{Weitzenbock} is equivalent to \eqref{Weitzenbock1}. We now take inner product of \eqref{Weitzenbock} and $\omega \otimes \omega$, and integrate over $M$ to get
\begin{align*}
    0&=\int_M\left\langle-\Delta W^++\tfrac{S}{2}W^+-6W^+ \circ W^++2|W^+|^2I,\omega\otimes\omega\right\rangle dg
    \\
    &=\int_M\Big(-2W^+(\omega,\Delta\omega)-2W^+(\nabla_e\omega,\nabla^e\omega)
    \\
    &\qquad +\tfrac{S}{2}\lambda_1|\omega|^2-6\lambda_1^2|\omega|^2+2(\lambda_1^2+\lambda_2^2+\lambda_3^2)|\omega|^2\Big)dg
    \\
    &\le -2\int_M\lambda_1\langle\omega,\Delta\omega\rangle dg
    \\
    &=\tfrac{S}{3}\int_M|\nabla \omega|^2dg,
\end{align*}
where we have applied Proposition \ref{prop 2.2}(1)(2) and \eqref{eq 2.1}. Since $S$ is a negative constant, we have that $\nabla \omega$ vanishes everywhere. It follows that $\omega$ is a K\"ahler form.

In case where $W^+\not\equiv0$ and $S$ is a positive constant, we can also find a K\"ahler form in the same way.

\end{proof}




It is clear that Corollary \ref{corollary_main_0},  Corollary \ref{corollary classification 1}, and  Corollary \ref{corollary classification 2} can be easily observed from the proof of Theorem \ref{theorem_main_0}.



\section{The Harmonic self-dual Weyl case}

If $M$ is only assumed to have harmonic self-dual Weyl tensor, then the proof given in the previous section breaks down because we cannot conclude that both $\l_1$ and $\l_3$ are constant functions without the constant scalar curvature assumption. 
We shall show, by improving the partial differential inequality satisfied by $\l_3-\l_1$, that the scalar curvature must be a constant unless $W^+ \equiv 0$, as a consequence of $\delta W^+=0$ and $\l_2 \geq -\frac{S}{12}$. 

\begin{proposition}\label{prop 3.1}
Let $(M^4,g)$ be a closed oriented four-manifold with $\delta W^+=0$. Denote by $\l_1\leq \l_2 \leq \l_3$ the eigenvalues of $W^+$. If $\l_2 \geq -\frac{S}{12}$ everywhere, then either $M$ is anti-self-dual with nonnegative scalar curvature, or $M$ has constant scalar curvature. 
\end{proposition}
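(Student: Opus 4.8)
The plan is to show that when $W^+\not\equiv 0$ the scalar curvature must be constant, by recovering the gradient correction terms that were discarded in the barrier computation behind Proposition \ref{prop 2.1}. First I would dispose of the trivial alternative: if $W^+\equiv 0$ then $M$ is anti-self-dual and $0=\l_2\ge -\tfrac{S}{12}$ immediately gives $S\ge 0$, which is the first conclusion (no constancy is claimed there). So assume $W^+\not\equiv 0$. By Proposition \ref{prop 2.1} we then have $\l_3-\l_1\equiv c>0$ and $\l_2\equiv-\tfrac{S}{12}$. Combined with $\l_1+\l_2+\l_3=0$ this yields $\l_1=\tfrac{S}{24}-\tfrac{c}{2}$ and $\l_3=\tfrac{S}{24}+\tfrac{c}{2}$, so all three eigenvalues are affine in $S$ and in particular smooth; it therefore suffices to prove $dS\equiv 0$.

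On the open set $\Omega=\{\l_1<\l_2<\l_3\}$ choose a local smooth orthonormal eigenframe $\{\omega_1,\omega_2,\omega_3\}$ of $W^+$, and use the exact second-order perturbation formula for a simple eigenvalue: for $a\in\{1,3\}$,
\[
\Delta\l_a=(\Delta W^+)(\omega_a,\omega_a)+2\sum_{b\ne a}\frac{|(\nabla W^+)(\omega_a,\omega_b)|^2}{\l_a-\l_b}.
\]
The diagonal terms $(\Delta W^+)(\omega_a,\omega_a)$ are furnished by \eqref{Weitzenbock1} and coincide with the right-hand sides of \eqref{simplepdel1} and \eqref{simplepdel3}. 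Subtracting the $a=1$ identity from the $a=3$ identity and using $\l_2=-\tfrac{S}{12}$ and $\l_1+\l_2+\l_3=0$, the algebraic part collapses to $6(\l_3-\l_1)(\l_2+\tfrac{S}{12})=0$, exactly as in Proposition \ref{prop 2.1}. Since $\l_3-\l_1\equiv c$, the left-hand side $\Delta(\l_3-\l_1)$ vanishes, so the two remaining gradient sums must cancel; as $\l_3-\l_b>0>\l_1-\l_b$, every term is sign-definite and hence vanishes separately. Thus $(\nabla W^+)(\omega_a,\omega_b)=0$ for all $a\ne b$ on $\Omega$: the covariant derivative of $W^+$ is diagonal in the eigenframe.

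Writing $\nabla\omega_a=\sum_b\theta_{ab}\omega_b$ with $\theta_{ab}=-\theta_{ba}$ the connection one-forms, a direct computation gives $(\nabla_X W^+)(\omega_a,\omega_b)=(\l_a-\l_b)\theta_{ab}(X)$, so the previous step says $(\l_a-\l_b)\theta_{ab}\equiv 0$. Now I would expand the divergence-free condition $\delta W^+=0$ in this frame; collecting the coefficient of each $\omega_a$ produces an identity of the form
\[
\iota_{\nabla\l_a}\omega_a+\sum_{b\ne a}(\l_a-\l_b)\,\iota_{\theta_{ab}^\sharp}\omega_b=0,
\]
that is, $\delta W^+=0$ couples $d\l_a$ to the gap-weighted off-diagonal connection forms (this is the same mechanism exploited in Wu \cite{Wu21}). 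The vanishing of $(\l_a-\l_b)\theta_{ab}$ kills the sum and leaves $\iota_{\nabla\l_a}\omega_a=0$; since each $\omega_a$ is a pointwise non-degenerate (symplectic) two-form, this forces $\nabla\l_a=0$, and in particular $dS=-12\,d\l_2=0$ on $\Omega$.

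Finally I would globalize. Because $S$ is smooth with $dS=0$ on the open set $\Omega$, and the degeneracy locus $M\setminus\Omega=\{S=4c\}\cup\{S=-4c\}$ is a union of two level sets on each of which $S$ is constant, one obtains $dS=0$ almost everywhere (at every Lebesgue density point of the degeneracy locus, $dS$ vanishes as well). Since $M$ is connected and $S$ is Lipschitz with $dS=0$ a.e., $S$ is constant. I expect the main obstacle to be the clean derivation of the $\delta W^+=0$ identity in the eigenframe—the precise coupling of $d\l_a$ to the off-diagonal connection forms, which must either be verified by expanding $\nabla W^+$ in components or cited from \cite{Wu21}—together with the bookkeeping on the degeneracy locus, where the eigenframe fails to be smooth.
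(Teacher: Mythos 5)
Your proposal is correct and follows essentially the same route as the paper: it recovers the gradient terms discarded in Proposition \ref{prop 2.1} from the formula for $\Delta(\lambda_3-\lambda_1)$, uses the constancy of $\lambda_3-\lambda_1$ to force those terms to vanish, and then reads off $\nabla\lambda_a=0$ from the first-order consequences of $\delta W^+=0$ (Derdzinski's eigenframe formulas, which the paper cites and you rederive). The only differences are cosmetic: you work on the locus where all eigenvalues are simple and globalize with a density-point argument on the two level sets $\{S=\pm 4c\}$, whereas the paper works on the open dense set where the number of distinct eigenvalues is locally constant and concludes via Lipschitz continuity.
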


\begin{proof}

Let us recall some computations in \cite{Derdzinski83}.
For any $x\in M$, we can choose an  orthogonal basis $\omega_1$, $\omega_2$, $\omega_3$ of $\Lambda^+_x$, consisting of eigenvectors of $W^+$ such that
\begin{equation}\label{eq 3.1}
    |\omega_1|^2=|\omega_2|^2=|\omega_3|^2=2.
\end{equation}
Consequently, we have that, at $x$
\begin{equation}\label{eq 3.2}
    W^+=\frac{1}{2}\left(\l_1 \omega_1 \otimes \omega_1 + \l_2 \omega_2 \otimes \omega_2 + \l_3 \omega_3 \otimes \omega_3 \right)
\end{equation}
with $\l_1 \leq \l_2 \leq \l_3$ being the eigenvalues of $W^+$. 
Let $M_{W^+}\subset M$ be the open dense set where the number of distinct eigenvalues of $W^+$ is locally constant. In $M_{W^+}$, the pointwise formula \eqref{eq 3.2} is valid locally in the sense that the mutually orthogonal sections $\omega_1, \omega_2, \omega_3$ of $\Lambda^+$ satisfying \eqref{eq 3.1} and the functions $\l_1, \l_2,\l_3$ may be assumed differentiable in a neighborhood of any $p\in M_{W^+}$. 


Since $\Lambda^+$ is invariant under parallel transport, in a neighborhood of $p \in M_{W^+}$, there exist one-forms $a,b$, and $c$ defined near $p$, such that we have \eqref{eq 3.2} and 
\begin{align*}
    \nabla \omega_1&=a\otimes\omega_2-c\otimes\omega_3,
    \\
    \nabla \omega_2&=b\otimes\omega_3-a\otimes\omega_1,
    \\
    \nabla \omega_3&=c\otimes\omega_1-b\otimes\omega_2.
\end{align*}
It was shown by Derdzinski \cite{Derdzinski83} that if $\delta W^+=0$, then in a neighborhood of $p \in M_{W^+}$, we have
\begin{eqnarray*}
    \nabla \lambda_1&=&(\lambda_2-\lambda_1)(\iota_{a^\#}\omega_3)^{\#}+(\lambda_3-\lambda_1)(\iota_{c^\#}\omega_2)^{\#},\label{grad1}
    \\
    \nabla \lambda_2&=&(\lambda_1-\lambda_2)(\iota_{a^\#}\omega_3)^{\#}+(\lambda_3-\lambda_2)(\iota_{b^\#}\omega_1)^{\#},\label{grad2}
    \\
    \nabla \lambda_3&=&(\lambda_1-\lambda_3)(\iota_{c^\#}\omega_2)^{\#}+(\lambda_2-\lambda_3)(\iota_{b^\#}\omega_1)^{\#},\label{grad3}
    \end{eqnarray*}
and 
\begin{eqnarray*}
    \Delta\lambda_1 &=& 2(\lambda_1-\lambda_2)|(\iota_{a^\#}\omega_3)^{\#}|^2+2(\lambda_1-\lambda_3)|(\iota_{c^\#}\omega_2)^{\#}|^2 \\
    && +\tfrac{S}{2}\lambda_1-2\lambda_1^2-4\lambda_2\lambda_3,
    \\
    \Delta\lambda_2&=&2(\lambda_2-\lambda_1)|(\iota_{a^\#}\omega_3)^{\#}|^2+2(\lambda_2-\lambda_3)|(\iota_{b^\#}\omega_1)^{\#}|^2 \\
    && +\tfrac{S}{2}\lambda_2-2\lambda_2^2-4\lambda_1\lambda_3,
    \\
    \Delta\lambda_3&=&2(\lambda_3-\lambda_1)|(\iota_{c^\#}\omega_2)^{\#}|^2+2(\lambda_3-\lambda_2)|(\iota_{b^\#}\omega_1)^{\#}|^2\\
    && +\tfrac{S}{2}\lambda_3-2\lambda_3^2-4\lambda_1\lambda_2,
\end{eqnarray*}
where $\iota$ is the interior product and $\#$ is the sharp operator. 

It follows from the assumption $\l_2\ge-S/12$ that in the set $M_{W^+}$, we have 
\begin{eqnarray*}
 \Delta(\lambda_3-\lambda_1) 
&= & 6(\l_3-\l_1)\left( \l_2+\tfrac{S}{12}\right) + 4(\lambda_3-\lambda_1)|(\iota_{c^\#}\omega_2)^{\#}|^2 \\
&& +2(\lambda_3-\lambda_2)|(\iota_{b^\#}\omega_1)^{\#}|^2  +2(\lambda_2-\lambda_1)|(\iota_{a^\#}\omega_3)^{\#}|^2 \\
&\geq &  4(\lambda_3-\lambda_1)|(\iota_{c^\#}\omega_2)^{\#}|^2  +2(\lambda_3-\lambda_2)|(\iota_{b^\#}\omega_1)^{\#}|^2 \\
&&+2(\lambda_2-\lambda_1)|(\iota_{a^\#}\omega_3)^{\#}|^2 .
\end{eqnarray*}
Since $\l_3-\l_1$ is a nonnegative constant on $M$ by Proposition \ref{prop 2.1}, 
we conclude that 
\begin{equation*}
    (\lambda_3-\lambda_1)|(\iota_{c^\#}\omega_2)^{\#}|^2=(\lambda_3-\lambda_2)|(\iota_{b^\#}\omega_1)^{\#}|^2=(\lambda_2-\lambda_1)|(\iota_{a^\#}\omega_3)^{\#}|^2=0,
\end{equation*}
in a neighborhood of $p\in M_{W^+}$. 
This implies that 
\begin{equation*}
    \nabla \l_1 =\nabla \l_3 =0
\end{equation*}
in that neighborhood of $p\in M_{W^+}$. Therefore, $\l_1$ and $\l_3$ are locally constant on $M_{W^+}$. Since $M_{W^+}$ is open and dense in $M$, and since $\l_1$ and $\l_3$ are locally Lipschitz functions, we conclude that $\l_1$ and $\l_3$ are global constant functions. Moreover $\l_2=-\l_1-\l_3$ is also a constant. 

If $M$ is not anti-self-dual, then we have $\l_2=-\frac{S}{12}$ by Proposition \ref{prop 2.1}. 
$S$ must be a constant since $\l_2$ is so. 

\end{proof}




\begin{proof}[Proof of Theorem \ref{Theorem_main_1}]
If $M$ is anti-self-dual, then we have $S\geq 0$ in view of $0=\l_2 \geq -\frac{S}{12}$. 
If $M$ is not anti-self-dual, then $M$ has constant scalar curvature by Proposition \ref{prop 3.1}. In the latter case, Proposition \ref{prop 2.2} is valid and the proof of K\"ahlerity is the same as the proof of Theorem \ref{theorem_main_0} given in Section 2. 
\end{proof}

It is clear that Corollary \ref{corollary classification 3} and Corollary \ref{corollary classification 4} can be observed from the proof of Theorem \ref{Theorem_main_1}.

\section*{Acknowledgment}
The authors would like to thank Professor Jiaping Wang for some helpful discussions related to this work.


\bibliographystyle{alpha}
\bibliography{ref}

\end{document}